\documentclass{article}
\PassOptionsToPackage{numbers,sort&compress}{natbib}
\usepackage[preprint]{neurips_2026}
\usepackage[T1]{fontenc}
\usepackage[utf8]{inputenc}
\usepackage{amsmath,amssymb,amsthm,booktabs,tabularx,microtype,xcolor}
\usepackage[hidelinks]{hyperref}
\hypersetup{pdfauthor={Yukai Song},pdftitle={Sparse coloring for isosceles-free grids: an AI-assisted proof case study},pdfsubject={Preprint; accepted for poster presentation at the NeurIPS 2026 MATH-AI Workshop},pdfkeywords={isosceles-free sets, sparse hypergraph coloring, AI-assisted mathematics}}
\newtheorem{theorem}{Theorem}
\newtheorem{proposition}[theorem]{Proposition}

\newcommand{\Z}{\mathbb Z}
\newcommand{\codeg}{\Delta_{3,2}}
\title{Sparse coloring for isosceles-free grids:\\an AI-assisted proof case study}
\author{Yukai Song \\
University of Pittsburgh \\
\texttt{yus190@pitt.edu}}
\begin{document}
\maketitle
\begin{abstract}
We study the largest size $C(n)$ of a subset of the $n\times n$ integer grid containing no isosceles triple, including equally spaced collinear triples. We prove $C(n)=\Omega\!\left(n\sqrt{\log\log n/\log n}\right)$ by applying the sparse hypergraph coloring theorem of Cooper and Mubayi. Elementary geometry and primitive-direction counts give maximum degree $O(n^2\log n)$ and pair-codegree at most $5n$ for the forbidden-triple hypergraph. The resulting bound improves the explicit guarantee in PatternBoost by a factor of order $\sqrt{\log\log n}$. Recovered interaction records document how a Codex literature proposal, human route selection, finite verification, and author-relayed review informed proof synthesis and repair. We connect these actions to successive proof artifacts and extract two case-derived checking practices: tracking objects, parameters and conclusions in theorem applications, and seeding a definitional omission to test a finite verifier. Separately implemented enumerators agree on complete edge sets for every $2\le n\le12$; suppressing the degenerate-case branch loses exactly the equally spaced collinear triples. These records document assistance under human direction, and finite checks support implementation consistency rather than the asymptotic theorem itself.
\end{abstract}

\section{Problem, result, and scope}
Let $G_n=\{0,\ldots,n-1\}^2$. A set $S\subseteq G_n$ is \emph{isosceles-free} if
\begin{equation}\label{eq:definition}
 \|a-b\|_2\ne\|b-c\|_2\qquad\text{for all pairwise distinct }a,b,c\in S.
\end{equation}
Thus equally spaced collinear triples are forbidden; we call these triples \emph{flat}. Equal distances between disjoint pairs are allowed. Write $C(n)$ for the largest size of such a set. Translation identifies this convention with the problem on $[n]^2$ in PatternBoost~\citep[Section 4.1]{patternboost} and Problem 6.59 of~\citet{discovery}.

\begin{theorem}\label{thm:main}
There exist absolute constants $c>0$ and $n_0$ such that for every integer $n\ge n_0$,
\begin{equation}\label{eq:bound}
C(n)\ge c n\sqrt{\frac{\log\log n}{\log n}}.
\end{equation}
\end{theorem}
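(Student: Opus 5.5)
The plan is to form the $3$-uniform hypergraph $H_n$ on vertex set $G_n$ (so $N=n^2$ vertices) whose edges are the triples $\{a,b,c\}$ with $\|a-b\|_2=\|b-c\|_2$ for some labeling. An independent set of $H_n$ is exactly an isosceles-free set, so $C(n)=\alpha(H_n)$. I would then invoke the sparse hypergraph coloring theorem of Cooper and Mubayi. For a $3$-uniform hypergraph with maximum degree $\Delta$ and maximum pair-codegree $\codeg\le \Delta^{1/2}/f$, it gives chromatic number $O\bigl(\sqrt{\Delta/\log f}\bigr)$. Hence
\[
\alpha(H_n)\ \ge\ \frac{N}{\chi(H_n)}\ \ge\ c\,N\sqrt{\frac{\log f}{\Delta}}.
\]

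First I bound the maximum degree. Fix $v$. Every edge containing $v$ either has $v$ as apex or $v$ as a base vertex. In the apex case the other two points lie on a common circle about $v$. The number of pairs on circles centered at $v$ is $\sum_r r_2'(r)^2$, where $r_2'$ counts grid points at squared distance $r$ from $v$. A count of representations $x^2+y^2=m$ with $m\le 2n^2$ gives $\sum_{m\le 2n^2} r_2(m)^2=O(n^2\log n)$. In the base case, let $v$ and $a$ be the base vertices. The apex lies on the perpendicular bisector of $va$. Write $a-v=d\cdot u$ with $u$ primitive. The lattice points of the bisector inside $G_n$ number $O(n/|u|)$ when the bisector contains lattice points at all, and the parity constraints on $d$ and $u$ only reduce this. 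Summing over primitive directions $u$ and multiples $d$ with $|du|\le n\sqrt2$ gives
\[
\sum_{u}\frac{n}{|u|}\cdot\frac{n}{|u|}\ \approx\ n^2\sum_{|u|\le n}|u|^{-2}=O(n^2\log n).
\]
The flat triples contribute only $O(n^2)$. So $\Delta=O(n^2\log n)$.

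Next I bound the codegree. Fix a pair $\{a,b\}$. The third vertex $c$ must satisfy one of the following. Either $c$ is on the perpendicular bisector of $ab$, which gives at most $n$ points. Or $c$ is on the circle about $a$ of radius $|ab|$, or on the circle about $b$ of radius $|ab|$. A circle meets each vertical grid line in at most $2$ points, so each circle gives at most $2n$ points. The total is at most $5n$, and the flat cases are already included. Then I choose $f=\Delta^{1/2}/(5n)\asymp\sqrt{\log n}$, so $\log f\asymp\log\log n$. With $N=n^2$ and $\Delta=O(n^2\log n)$, this yields $\alpha\ge c\,n^2\sqrt{\log\log n/(n^2\log n)}=c\,n\sqrt{\log\log n/\log n}$.

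The main obstacle is the degree bound. The apex count needs the second moment of $r_2$ to be $O(X\log X)$, and the base count needs a correct primitive-direction count on bisectors. I would try to handle both through the single bisector parametrization: for the apex case, count the pairs $(a,c)$ by the bisector through $v$. A secondary check is matching the exact hypotheses of Cooper--Mubayi: the codegree condition $\codeg\le\Delta^{1/2}/f$ with $f$ large, and $\Delta$ possibly taken as an upper bound instead of the true maximum degree.
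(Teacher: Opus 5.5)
Your degree and codegree arguments follow the paper's route. Two circles plus the perpendicular bisector give $\codeg(H_n)\le5n$, and primitive directions with bisector lattice points handle the base role. For the apex role you cite the classical estimate $\sum_{m\le X}r_2(m)^2=O(X\log X)$, which is true and may be cited. The paper instead proves this count elementarily via $U=p-q$, $W=p+q$, $U\cdot W=0$, $U=gh$, $W=kh^\perp$, which is your proposed bisector fallback.

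The genuine gap is in the final application of Cooper--Mubayi. You set $f=\Delta^{1/2}/(5n)$ and assert $f\asymp\sqrt{\log n}$, and your final computation uses $\log f\gtrsim\log\log n$. That is a \emph{lower} bound on $f$, hence on the true maximum degree $\Delta(H_n)$, but you have only shown $\Delta(H_n)=O(n^2\log n)$. Neither option available to you is justified as written:
\begin{itemize}
\item If you use the true degree and $\Delta(H_n)$ were much smaller than $n^2\log n$, then $f$ could be bounded and the theorem would give nothing.
\item If you put the upper bound $D=Bn^2\log n$ in place of $\Delta$, this is not licensed by the statement as quoted. That statement is phrased for the hypergraph's actual maximum degree, with the codegree hypothesis $\codeg\le\sqrt{\Delta}/f$ tied to that value.
\end{itemize}
You flag this as a ``secondary check'' but do not resolve it.

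The paper resolves it by padding. It adjoins a vertex-disjoint $3$-uniform star with exactly $D_n=\lceil Bn^2\log n\rceil$ edges. The new hypergraph then has maximum degree exactly $D_n$ and pair-codegree at most $\max\{\codeg(H_n),1\}\le5n=\sqrt{D_n}/f_n$, with $f_n=\Theta(\sqrt{\log n})$. The paper applies the theorem there, restricts the coloring to the $n^2$ grid vertices, and takes a largest color class.

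Alternatively, you could prove the matching bound $\Delta(H_n)=\Omega(n^2\log n)$ and then work with the true degree. For instance, apply the asymptotic $\sum_{m\le X}r_2(m)^2\sim4X\log X$ to pairs on circles about a central vertex. Either way, some such step is needed to justify $\log f\asymp\log\log n$.
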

All logarithms are natural. PatternBoost~\citep[Proposition A.1]{patternboost} gives $C(n)=\Omega(n/\sqrt{\log n})$ by random selection and deletion. Equation~\eqref{eq:bound} improves that stated guarantee by an unbounded factor of order $\sqrt{\log\log n}$; it does not determine the order of $C(n)$. Our contribution is an application of an existing coloring theorem, not a new general theorem on hypergraphs.

As surveyed by \citet[Section 2]{janosik2026}, this possibly-flat problem is attributed to Wu and Ellenberg--Jain, and also to Erd\H{o}s. Their February 2026 survey records the sublinear PatternBoost lower bound and the conjecture $C(n)=\Theta(n)$.

The coloring mechanism has geometric precedents. Ghosal, Goenka and Keevash~\citep[Section 1.1]{ghosal} record the same logarithmic improvement for sets with no three collinear points; Clemen~\citep{clemen} applies sparse coloring to distinct-slope selection. Neither restriction excludes all noncollinear isosceles triples. The work here is to verify the hypotheses for the isosceles hypergraph.

On the complementary upper-bound side, \citet[Theorem 1.4]{croot2026} prove $C(n)\ll n^2\exp(-c_1\log n/\log\log n)$ for an absolute $c_1>0$, with degenerate triples also forbidden.

Our contributions are the complete application below and an evidence-based account of its AI-assisted development. An earlier internal working draft contained the result; the present workshop version simplifies the proof and its dependencies. The case study links recorded proposals, checks and human instructions to actual proof edits, while separating documented assistance from unobserved attribution or approval.

\section{A complete proof from degree and codegree bounds}
Let $H_n$ be the simple $3$-uniform hypergraph with vertex set $G_n$ and one unordered edge for every triple violating~\eqref{eq:definition}. Write $\alpha(H)$ for the largest independent-set size and $\chi(H)$ for the minimum number of colors in a vertex coloring with no monochromatic edge. A set is independent exactly when it contains no forbidden triple, so $\alpha(H_n)=C(n)$ and $|V(H_n)|=n^2$. For any $3$-uniform hypergraph $H$, let $\Delta(H)$ be its maximum vertex degree and let
\[
\codeg(H)=\max_{u\ne v}|\{e\in E(H):\{u,v\}\subseteq e\}|
\]
be its maximum pair-codegree.

\begin{proposition}\label{prop:counts}
For an absolute constant $B>0$ and every integer $n\ge2$,
$\Delta(H_n)\le Bn^2\log n$ and $\codeg(H_n)\le5n$.
\end{proposition}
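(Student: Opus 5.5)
The plan is to handle the codegree first, since it is elementary, and then the degree, which needs a count over directions. Fix distinct $u,v\in G_n$. An edge containing $\{u,v\}$ is a triple $\{u,v,w\}$ in which some vertex is equidistant from the other two. There are three cases.

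\textbf{Apex $w$.} Here $\|w-u\|=\|w-v\|$, so $w$ lies on the perpendicular bisector of $uv$. That line meets $G_n$ in at most $n$ points, because a non-horizontal line meets each row at most once and a horizontal line meets each column at most once.

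\textbf{Apex $u$.} Here $\|w-u\|=\|u-v\|$, so $w$ lies on the circle centred at $u$ of radius $\|u-v\|$. Each vertical line $x=k$ meets this circle in at most $2$ points. That gives at most $2n$ points, minus $v$ itself.

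\textbf{Apex $v$.} This is symmetric to the previous case and gives at most $2n$ points.

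Flat triples are already included, since the middle point is the apex. Summing the three cases gives $\codeg(H_n)\le n+2n+2n=5n$.

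For the degree, fix $a\in G_n$ and count edges $\{a,b,c\}$ by the role of $a$.

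\textbf{$a$ is not the apex.} Then the apex $b$ satisfies $\|b-a\|=\|b-c\|$. For each of the at most $n^2$ choices of $b$, the point $c$ lies on the circle about $b$ through $a$, which contains at most $2n$ lattice points of $G_n$. This contributes $O(n^3)$ edges.

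That is too large, so I refine the count. Write $c-b$ as a rotation-type image of $a-b$ preserving the norm. Equivalently, let $b$ be the apex and write $d=a-b$, with $c=b+d'$ and $\|d'\|=\|d\|$. Then $c-a=d'-d$, and $b$ lies on the perpendicular bisector of $ac$.

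So I instead parametrise by $c$. Set $v=c-a$, with $v=m\,p$ for $p$ primitive and $m\ge1$. The apex $b$ lies on the perpendicular bisector of $ac$, which is the line $\{x:(x-a)\cdot v=\|v\|^2/2\}$. Lattice points on a line with primitive normal $p$ have spacing $\|p\|$ along the line, so the bisector contains $O(n/\|p\|+1)$ points of $G_n$. It contains no lattice points at all unless $m^2\|p\|^2/2\cdot$ is compatible with the divisibility condition, but I will ignore that and just bound.

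Summing over $c$ gives
\[
\sum_{p}\sum_{m}\bigl(n/\|p\|+1\bigr),
\]
where $p$ ranges over primitive vectors with $\|p\|\le 2n$ and $m\le 2n/\|p\|$. This equals $O\bigl(\sum_{p} n^2/\|p\|^2\bigr)+O(n^2)$. Since there are $O(r)$ primitive vectors with $\|p\|\approx r$, the first sum is $O\bigl(n^2\sum_{r\le 2n}1/r\bigr)=O(n^2\log n)$.

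\textbf{$a$ is the apex.} Then $b,c$ lie on a common circle about $a$. For each $b$ there are at most $2n$ choices of $c$, so this naive bound is again $O(n^3)$.

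Instead I again parametrise by $v=c-b$. The point $a$ lies on the perpendicular bisector of $bc$, so $b$ and $c$ are reflections of each other across a line through $a$ with primitive normal direction $p\parallel v$. For each primitive direction $p$, the pairs $\{b,c\}$ with $c-b\parallel p$ and $a$ on their bisector correspond to points $b$ on the line $L$ through $a$ perpendicular to $p$, together with a shift $m\,p$ with $m\le 2n/\|p\|$. The midpoint $b+mp/2$ must lie on $L$. For each $m$, the point $b$ lies on a fixed translate of $L$, which has $O(n/\|p\|+1)$ grid points. The same sum as before gives $O(n^2\log n)$.

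The main obstacle is this primitive-direction summation, in particular making sure the $+1$ terms only contribute $O(n^2)$. They do, because the number of pairs $(p,m)$ is $O(n^2)$. Flat triples are covered as the special case $b=a\pm$ the same vector, so no separate treatment is needed. Combining the cases gives $\Delta(H_n)\le Bn^2\log n$.
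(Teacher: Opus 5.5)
Your proof is correct and follows essentially the same route as the paper. The codegree argument is the same three-set union bound, and the degree bound uses the same apex/base-endpoint split: your base-endpoint count (apex on a line with primitive normal, lattice spacing $\|p\|$) matches the paper's translate of $\Z h^\perp$, and your apex count (midpoint of $bc$ on the line through $a$ perpendicular to $c-b$) is the paper's $(U,W)=(gh,kh^\perp)$ parametrization in different coordinates. The differences are cosmetic: you use Euclidean rather than sup-norm shells for the primitive-vector count, and one sentence wrongly places $b$ rather than the midpoint on $L$, which you correct in the next sentence.
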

\begin{proof}
Fix distinct $u,v$. The third point of an edge containing them lies on one of two circles, centered at $u$ and $v$ with radius $\|u-v\|_2$, or on their perpendicular bisector. Each circle has at most $2n$ grid points, since each first coordinate permits at most two second coordinates. A line has at most $n$ grid points, including a vertical line. A union bound gives $5n$; excluded endpoints or overlaps can only reduce this number.

For the degree bound, fix a vertex $v$. We count separately its roles as apex and base endpoint, allowing overcounting. As apex, the other points are $v+p,v+q$ with $p\ne q$ and $\|p\|_2=\|q\|_2$. Put $U=p-q$ and $W=p+q$. Then $U\ne0$, $U\cdot W=0$, and $\|U\|_\infty,\|W\|_\infty\le2n$. Write $U=gh$, where $g$ is a positive integer and $h=(h_1,h_2)\in\Z^2$ is primitive, meaning $\gcd(|h_1|,|h_2|)=1$. Put $s=\|h\|_\infty$ and $h^\perp=(-h_2,h_1)$. Every integral vector orthogonal to $h$ equals $kh^\perp$ for an integer $k$, whence
\[
1\le s\le2n,\qquad 1\le g\le2n/s,\qquad |k|\le2n/s.
\]
There are at most $8s$ primitive vectors on the sup-norm shell $s$. The identities $p=(W+U)/2$, $q=(W-U)/2$ recover the ordered pair uniquely. Dropping parity and boundary restrictions bounds its count by
\begin{equation}\label{eq:apex}
\sum_{s=1}^{2n}8s\,\frac{2n}{s}\left(\frac{4n}{s}+1\right)
=O(n^2\log n).
\end{equation}
The value $k=0$ is included: it covers antipodal collinear triples with midpoint $v$.

As a base endpoint, the other base endpoint is $b=v+gh$ with $h$ primitive, $s=\|h\|_\infty\le n-1$, and $1\le g\le(n-1)/s$. An apex $a$ must satisfy
\[
2(a-v)\cdot h=g\|h\|_2^2.
\]
If this has integer solutions, they form one translate of $\Z h^\perp$. A coordinate of $h^\perp$ has absolute value $s$, so at most $1+\lfloor(n-1)/s\rfloor\le2n/s$ solutions lie in $G_n$. This gives at most
\begin{equation}\label{eq:base}
\sum_{s=1}^{n-1}(8s)\frac{n}{s}\frac{2n}{s}=O(n^2\log n)
\end{equation}
choices. Every incident edge is covered by at least one role. Both estimates are uniform in $v$, including boundary vertices, and establish the maximum-degree bound.
\end{proof}

\paragraph{The published coloring input.}
The $k=3$ case of Cooper--Mubayi~\citep[Theorem 5]{cooper2016} states that a $3$-uniform hypergraph of maximum degree $\Delta$ with $\codeg(H)\le\sqrt{\Delta}/f$ has
\begin{equation}\label{eq:cm}
\chi(H)=O\!\left(\sqrt{\frac{\Delta}{\log f}}\right).
\end{equation}
We use this in the regime $\Delta\to\infty$, $f\to\infty$, $f=\Delta^{o(1)}$. This regime is permitted: the partition lemma in that paper assumes the upper restriction $f=\Delta^{O(1)}$, and its Section 5.2 handles slow growth. No triangle-free, regularity, or connectedness hypothesis is required here. In particular, this is not a random-greedy guarantee.

\begin{proof}[Proof of Theorem~\ref{thm:main}]
Set $D_n=\lceil Bn^2\log n\rceil$ and $f_n=\sqrt{D_n}/(5n)$. Padding avoids requiring a matching lower bound on $\Delta(H_n)$, while allowing~\eqref{eq:cm} to be applied at the known scale $D_n$. Adjoin to $H_n$ a disjoint $3$-uniform star with exactly $D_n$ edges $\{z,a_i,b_i\}$, all leaf vertices distinct. The resulting hypergraph $\widehat H_n$ has maximum degree exactly $D_n$ and pair-codegree at most $\max\{\codeg(H_n),1\}\le5n=\sqrt{D_n}/f_n$.
Now $D_n=\Theta(n^2\log n)$, $f_n=\Theta(\sqrt{\log n})\to\infty$, and
\[
\log f_n=\tfrac12\log\log n+O(1).
\]
Apply~\eqref{eq:cm} to $\widehat H_n$ and restrict its coloring to $H_n$. Thus
\[
\chi(H_n)\le\chi(\widehat H_n)=O\!\left(n\sqrt{\frac{\log n}{\log\log n}}\right).
\]
A largest color class among the \emph{original} $n^2$ grid vertices has size at least $n^2/\chi(H_n)$ and is independent. This proves~\eqref{eq:bound} with absolute constants for all sufficiently large $n$.
\end{proof}

\section{An evidence-based AI-assisted proof case study}
We use contemporaneous user messages, visible Codex tool calls and outputs, and LaTeX patches. The E1--E3 records are dated 26--27 August 2026 (UTC); their turn metadata reports \texttt{gpt-5.6-sol}, with effort \texttt{max} except for the turn applying the R6 source patch, which records \texttt{ultra}. The starting context included an existing research repository, conditional proofs and random-greedy/ODE candidates; primitive-direction and codegree material was visible before R1. The run did not start from a bare problem statement. We selected the three episodes retrospectively around theorem connection, definition checking and claim repair, using traceable messages, execution results and manuscript changes, rather than sampling all interactions.

Table~\ref{tab:episodes} summarizes the episodes; Appendix~\ref{app:provenance}, including background entry R0, distinguishes original excerpts, translations and summaries. The 7 September submission preparation and current consistency checks used \texttt{gpt-6-astra}, effort \texttt{max}, separately from the August research record. These strings are recorded runtime identifiers, not evidence of model capability. Case-attributable token/time costs, the native Claude model/version and complete context were not recovered. The checking practices below are lessons from this single case.

\begin{table}[ht]
\caption{Recovered development episodes, with evidence IDs defined in Appendix~\ref{app:provenance}. Author-relayed review is distinguished from directly recorded tool use and edits.}
\label{tab:episodes}
\centering
\begin{tabularx}{\linewidth}{@{}>{\raggedright\arraybackslash}p{0.18\linewidth}>{\raggedright\arraybackslash}X>{\raggedright\arraybackslash}X@{}}
\toprule
\textbf{Episode} & \textbf{Proposal and check} & \textbf{Observed consequence} \\
\midrule
E1: theorem connection & Codex retrieved Cooper--Mubayi and proposed its application, separating existence from random-greedy behavior (R1). & The human explicitly authorized replacing the earlier candidate; Codex wrote the core proof artifact (R2--R3). \\
\addlinespace[3pt]
E2: definition check & Historical checks used radius-based generation and direct squared-distance enumeration at selected grid sizes (R4--R5). & At $n=12$ both reported 28,140 edges, 2,520 flat triples, maximum degree 850 and codegree 20. \\
\addlinespace[3pt]
E3: claim repair & Relayed feedback identified an original-graph parameter ceiling that did not cover the padded application (R6). & The historical repair used $H_n$ and its matching degree lower bound (R7). This version retains padding and drops the ceiling claim; R7 also records a separate citation edit. \\
\bottomrule
\end{tabularx}
\end{table}

\paragraph{Object--parameter tracking.}
The historical note used its matching degree and codegree estimates for $H_n$ to assert, in its notation $\Delta_2=\codeg$,
\[
f\le\frac{\sqrt{\Delta(H_n)}}{\Delta_2(H_n)}=\Theta(\sqrt{\log n}),
\]
and presented this as a limitation of its applications (R6--R7). Yet its existence proof applied the theorem to a padded graph of maximum degree $D$, using $f=\sqrt D/(5n)$. The original-graph ceiling did not cover that auxiliary-graph application: the object and its degree parameter had changed. This was a gap in the limitation argument, not an invalid padding construction.

The historical repair moved the existing matching maximum-degree lower bound before the application, then applied the theorem directly to $H_n$ with $f=\sqrt{\Delta(H_n)}/(5n)$. The current proof instead fixes $D_n=\lceil Bn^2\log n\rceil$, retains padding to avoid that lower-bound dependency, and drops the ceiling claim. It therefore makes no claim to bound every admissible auxiliary parameter. Increasing $D$ also changes the numerator of~\eqref{eq:cm}; a larger $f$ alone does not imply a stronger bound.

The case suggests recording five items for each theorem application: the object, each parameter's definition as an actual value or an upper bound, where the hypotheses are checked, the direct conclusion, and its transfer to the target. Here, the theorem bounds $\chi(\widehat H_n)$; restriction bounds $\chi(H_n)$, and a largest color class on the original vertices bounds $\alpha(H_n)$. This check also applies to claimed limitations: restricting a coloring to $H_n$ does not justify transferring a parameter limitation between the two graphs.

A separate R7 edit in the earlier note's linear-hypergraph comparison replaced an independence-number citation with the source's chromatic-number statement and the ensuing largest-color-class deduction. This suggests checking the source's direct conclusion separately from the inference used in the application.

\paragraph{Seeded-defect verification.}
The current consistency checks, prepared on 7 September, cover every $2\le n\le12$: separately implemented squared-distance enumeration and primitive-vector generation produce the same complete forbidden-edge set $E_{\mathrm{correct}}$. As a targeted negative control, we disable the generator's $k=0$ branch, obtaining $E_{\mathrm{mutated}}$. With $E_{\mathrm{flat}}$ the collinear triples in the correct set, the checks establish
\begin{equation}\label{eq:mutation}
E_{\mathrm{correct}}\setminus E_{\mathrm{mutated}}=E_{\mathrm{flat}},
\qquad E_{\mathrm{mutated}}\setminus E_{\mathrm{correct}}=\varnothing.
\end{equation}
The missing sets contain eight triples at $n=3$ and 2,520 at $n=12$. This is an intentionally seeded omission in the current consistency checks, not a historical bug or an estimate of error-detection rates. Both implementations were written by the integrating agent; their agreement is not independent external replication and does not prove the asymptotic theorem. Appendix~\ref{app:checks} and the accompanying code give all counts and checks.

\paragraph{Roles and evidence limits.}
The human chose the route and supplied review feedback; Codex retrieved literature, synthesized and edited proof text, and generated checks. Author-relayed model review is participant-reported evidence; the recovered record does not independently authenticate that review or establish external certification. The records support documented assistance in this case; they do not establish priority, autonomous discovery, causal speedup, comparative model advantage, or general reliability.

\section{Discussion and limitations}
The gain comes from the scale separation $\codeg(H_n)=O(n)$ versus $\sqrt{D_n}=\Theta(n\sqrt{\log n})$. The permitted sparsity factor is of order $\sqrt{\log n}$, whose logarithm supplies the additional $\log\log n$ in the coloring bound. This argument requires all the displayed counting steps; a plausible theorem match alone would not establish them.

The result is an existence bound. It supplies no explicit point set, usable numerical constant, efficient construction, or analysis of the usual random-greedy process. We make no priority or best-known claim beyond the named comparison with PatternBoost. The primitive-direction counts and treatment of vertex roles and flat triples are specific to this geometric application. The proposed transferable practices are to track the object, parameters and conclusion of a theorem application, and to test a verifier against a deliberately omitted part of the definition. This case illustrates these practices; their effectiveness across other problems, broader reliability and efficiency remain unmeasured.

\bibliographystyle{plainnat}
\bibliography{references}

\begin{thebibliography}{7}
\providecommand{\natexlab}[1]{#1}
\providecommand{\url}[1]{\texttt{#1}}
\expandafter\ifx\csname urlstyle\endcsname\relax
  \providecommand{\doi}[1]{doi: #1}\else
  \providecommand{\doi}{doi: \begingroup \urlstyle{rm}\Url}\fi

\bibitem[Charton et~al.(2024)Charton, Ellenberg, Wagner, and
  Williamson]{patternboost}
Fran\c{c}ois Charton, Jordan~S. Ellenberg, Adam~Zsolt Wagner, and Geordie
  Williamson.
\newblock {PatternBoost}: Constructions in mathematics with a little help from
  {AI}.
\newblock \emph{arXiv preprint arXiv:2411.00566v1}, 2024.
\newblock URL \url{https://arxiv.org/abs/2411.00566v1}.

\bibitem[Clemen(2024)]{clemen}
F.~C. Clemen.
\newblock Applications of sparse hypergraph colorings.
\newblock \emph{arXiv preprint arXiv:2406.01499}, 2024.
\newblock URL \url{https://arxiv.org/abs/2406.01499}.

\bibitem[Cooper and Mubayi(2016)]{cooper2016}
Jeff Cooper and Dhruv Mubayi.
\newblock Coloring sparse hypergraphs.
\newblock \emph{SIAM Journal on Discrete Mathematics}, 30\penalty0
  (2):\penalty0 1165--1180, 2016.
\newblock \doi{10.1137/140995805}.
\newblock URL \url{https://arxiv.org/abs/1404.2895}.

\bibitem[Croot et~al.(2026)Croot, Mao, Pohoata, Sheffer, and Yip]{croot2026}
Ernie Croot, Junzhe Mao, Cosmin Pohoata, Adam Sheffer, and Chi~Hoi Yip.
\newblock A combinatorial large sieve for {Sidon} sets, distances, and norm
  forms, 2026.
\newblock URL \url{https://arxiv.org/abs/2606.17487v2}.
\newblock arXiv:2606.17487v2, revised June 24, 2026.

\bibitem[Georgiev et~al.(2025)Georgiev, G\'omez-Serrano, Tao, and
  Wagner]{discovery}
Bogdan Georgiev, Javier G\'omez-Serrano, Terence Tao, and Adam~Zsolt Wagner.
\newblock Mathematical exploration and discovery at scale.
\newblock \emph{arXiv preprint arXiv:2511.02864v3}, 2025.
\newblock URL \url{https://arxiv.org/abs/2511.02864v3}.

\bibitem[Ghosal et~al.(2026)Ghosal, Goenka, and Keevash]{ghosal}
A.~Ghosal, R.~Goenka, and P.~Keevash.
\newblock On subsets of lattice cubes avoiding affine and spherical
  degeneracies.
\newblock \emph{Discrete \& Computational Geometry}, 2026.
\newblock \doi{10.1007/s00454-026-00853-7}.
\newblock URL \url{https://arxiv.org/abs/2509.06935}.
\newblock Published online July 16, 2026.

\bibitem[J{\'a}nosik et~al.(2026)J{\'a}nosik, N{\'a}dor, Nagy, and
  Simon]{janosik2026}
M{\'a}t{\'e} J{\'a}nosik, Art{\'u}r N{\'a}dor, Zolt{\'a}n~L{\'o}r{\'a}nt Nagy,
  and L{\'a}szl{\'o}~Bence Simon.
\newblock Avoiding configurations of small size in the square grid, 2026.
\newblock URL \url{https://arxiv.org/abs/2601.14465v2}.
\newblock arXiv:2601.14465v2, revised February 19, 2026.

\end{thebibliography}
\clearpage
\appendix
\section{Evidence for the process account}\label{app:provenance}
The evidence below was recovered from the contemporaneous conversation and associated proof artifacts. IDs are local to this supplement. Private paths, account identifiers and unrelated content are omitted. Original English/TeX excerpts, English translations of Chinese messages, and editorial summaries are explicitly distinguished. Dates and times are UTC. We do not infer internal reasoning, unlogged actions or complete intellectual provenance from these records.

\paragraph{R0: starting context, runtime metadata and record selection (summary).}
Before R1, the visible task was to assess existing candidates in a research repository for a complete, publishable argument. It included conditional proofs and random-greedy/ODE candidates. Recorded reads already exposed primitive-direction counts and pair-codegree arguments; some earlier material used a nondegenerate-only hypergraph, so its definition was not identical to~\eqref{eq:definition}. R1 was therefore a literature proposal within an existing research effort, not a response to an otherwise empty problem prompt. The visible additional actions were retrieval of Cooper--Mubayi, proposal of its application, subsequent human route selection, creation of a core proof artifact and revisions. Creating that artifact does not identify the first origin of each mathematical idea.

Turn metadata for the cited 26--27 August research events records \texttt{gpt-5.6-sol}. The turn containing the R6 source patch at 04:24 on 27 August records effort \texttt{ultra}; the other cited research events are in turns recorded as \texttt{max}. The separate 7 September submission-preparation record reports \texttt{gpt-6-astra}, effort \texttt{max}; the current consistency checks in Appendix~\ref{app:checks} belong to that preparation stage. These are the enclosing session's logged turn settings; they do not establish separately invoked agents' configurations or support a controlled model comparison. We retrospectively chose E1--E3 to expose a theorem connection, a definition check and a claim repair for which messages, executed checks and manuscript edits could be linked. They are not a random or exhaustive sample. Reliable case-level token/time costs were not recovered: totals for the longer conversation cannot be attributed to these episodes. Native Claude model/version information and its full review context were not recovered either. The surviving excerpts do not reconstruct the complete preceding context.

\paragraph{R1: source retrieval and a visible proposal.}
The recorded Codex tools searched for Cooper--Mubayi, opened \url{https://arxiv.org/pdf/1404.2895}, and received the theorem text. The subsequent public message stated, in translation: ``The sparse hypergraph coloring theorem of Cooper--Mubayi may already give a stronger existence lower bound ... this does not prove that the random greedy algorithm reaches that scale.'' A later message stated the candidate bound, named the degree and pair-codegree obligations, and withheld a priority claim. This supports a visible literature-driven proposal within the recovered record; it does not establish who first made the connection outside it.

\paragraph{R2--R3: human route choice and an actual artifact.}
The user's next instruction read, in translation: ``I agree to replace the ODE candidate with this coloring-theorem application, continue checking novelty, and then deliver the short core paper if that check passes.'' A recorded file patch subsequently created a LaTeX core manuscript with the forbidden-triple definition, the displayed lower bound, primitive-direction degree estimates, and the published coloring input. The instruction is direct evidence of human route selection. The patch is evidence of AI-assisted proof synthesis, not an independent correctness certificate or final human acceptance.

\paragraph{R4--R5: historical checks and their actual scope.}
A recorded Python program grouped grid points by squared radius about each apex and generated unordered edges for $n\in\{2,3,4,5,8,12\}$. A later JavaScript program instead enumerated unordered triples and compared their three squared distances for $n\in\{2,3,4,5,6,8,10,12\}$. Both outputs agree on their common cases; at $n=12$ they report 28,140 edges, 2,520 flat triples, maximum vertex degree 850 and maximum pair-codegree 20. The recovered historical execution does \emph{not} cover every integer from 2 through 12. It also does not compute $C(12)$ or test the asymptotic theorem.

\noindent\begin{minipage}{\linewidth}
\paragraph{R6: the old limitation and relayed feedback.}
An actual source patch at 04:24 on 27 August contains the following original English/TeX excerpt. The historical reference identifier and section number are retained literally; they do not refer to numbering in this paper. In that note, $\Delta_2(H_n)$ denotes the maximum pair-codegree.
\begin{verbatim}
With these tight global parameters, the hypothesis of
Theorem~\ref{thm:CM} forces
\[
 f\le\frac{\Delta(H_n)^{1/2}}{\Delta_2(H_n)}
   =\Theta(\sqrt{\log n}),
\]
and the application in Section~3 attains this order.
\end{verbatim}
\end{minipage}

The same patch went on to state, verbatim: ``This is a limitation of these two applications, not a claim that no future general theorem could extract more information from $H_n$.'' These statements used the original graph's tight parameters. The problem was the extension of that restriction to a proof actually using a padded graph of degree $D$; the inequality for the original graph was not itself the object mismatch.

At 13:50 on 27 August, a user message relayed feedback contrasting those objects and requested a direct application to $H_n$. A short translated excerpt reads: ``Section 4 says [the theorem's hypothesis forces the original-graph bound on $f$], but Section 3 actually applies to $H_n^+$ (maximum degree $D$); its hypothesis allows $f=\sqrt D/(5n)$.'' Brackets replace the formula already displayed above; this is an edited translation, not an independently recovered native review.

\paragraph{R7: the historical repair and the current proof (summary).}
Recorded patches at 13:54 and 13:55 moved the existing matching degree lower-bound proposition ahead of the coloring step and replaced the padded application by an application directly to $H_n$. With $\Delta(H_n)=\Theta(n^2\log n)$ established there, the choice $f=\sqrt{\Delta(H_n)}/(5n)$ supports the claimed coloring bound on that same object. The selected literal patch lines below show the change. The present workshop proof instead uses padding at the fixed scale $D_n$, omits the degree lower-bound dependency and makes no parameter-ceiling claim. These are different proof choices with different claims; the old feedback did not invalidate padding.

A separate patch at 13:59 corrected the earlier note's linear-hypergraph comparison: it replaced a cited independence-number assertion with the source's chromatic-number statement and then deduced the independent-set scale using a largest color class. This citation correction is distinct from the parameter repair. At 15:19, the user reported that a model review had passed the repaired version. The record authenticates the user message and the edits, not the native review, its model settings or its independence.

\paragraph{Selected literal patch lines (R7).}
The recorded edit defined $\Delta=\Delta(H_n)$ immediately before these lines. The signs below are from the actual patch; the ellipsis marks omitted lines. This excerpt documents the historical change, not the proof chosen in this version.
\begin{verbatim}
- D=\lceil K n^2\log n\rceil,
- \qquad f=\frac{\sqrt D}{5n}.
+ f:=\frac{\Delta^{1/2}}{5n}.
...
- \chi(H_n)\le\chi(H_n^+)
- =O\!\left(\sqrt{\frac{D}{\log f}}\right)
+ \chi(H_n)=O\!\left(\sqrt{\frac{\Delta}{\log f}}\right)
\end{verbatim}

\clearpage
\section{Current consistency checks}\label{app:checks}
The accompanying files are in the arXiv ancillary directory \texttt{anc/}. The checker \texttt{verify\_grid.py} uses only the Python standard library. From that directory, run
\begin{quote}
\verb|python verify_grid.py --min-n 2 --max-n 12 --output run.json|
\end{quote}
The two implementations share a coordinate indexing convention but not the edge predicate: one compares three integer squared distances; the other generates $(U,W)=(gh,kh^\perp)$ and enforces parity and square boundaries. Their complete unordered edge sets must agree. Both implementations were written by the integrating agent during the 7 September submission preparation and rerun unchanged for camera-ready validation. These current consistency checks cover every integer from 2 through 12, unlike the historical checks at selected sizes in R4--R5. This is an implementation cross-check, not an independent human or external replication.

\begin{table}[ht]
\centering
\caption{Current consistency checks for forbidden triples. The entries count edges, maximum degrees, pair-codegrees and flat (equally spaced collinear) edges; they do not give optimal isosceles-free set sizes $C(n)$. Complete edge-set equality and the seeded-defect checks hold for each row.}
\label{tab:finite-checks}
\begin{tabular}{rrrrr}
\toprule
$n$ & $|E(H_n)|$ & $\Delta(H_n)$ & $\codeg(H_n)$ & Flat edges \\
\midrule
2 & 4 & 3 & 2 & 0 \\
3 & 44 & 20 & 5 & 8 \\
4 & 172 & 42 & 10 & 24 \\
5 & 516 & 100 & 11 & 72 \\
6 & 1208 & 137 & 12 & 144 \\
7 & 2488 & 224 & 14 & 288 \\
8 & 4504 & 300 & 14 & 480 \\
9 & 7776 & 448 & 15 & 800 \\
10 & 12484 & 545 & 18 & 1200 \\
11 & 19196 & 732 & 19 & 1800 \\
12 & 28140 & 850 & 20 & 2520 \\
\bottomrule
\end{tabular}
\end{table}
The counts satisfy $\codeg(H_n)\le5n$. The targeted negative control suppresses $k=0$ only in the primitive generator. The checker forms the collinear forbidden-edge set directly from squared-distance enumeration, asserts that it equals the full missing-edge set, and asserts that the mutated generator adds no extra edge, as in~\eqref{eq:mutation}. The JSON records these set checks and digests. This diagnoses one deliberately seeded definitional omission; the earlier internal working draft already retains $k=0$. It is not a historical bug or a measurement of sensitivity to other defects.

The checked degree upper expression is $64n^2 H_{2n}+32n^2+16n^2H_{n-1}$, where $H_m=\sum_{s=1}^m1/s$. It follows by expanding the two sums in the main proof. This loose expression is a consistency bound, not a fitted growth law. The result JSON records the actual Python version, generation time, edge-set SHA-256 digests and elapsed times. Timing has no role in the mathematical or AI-effectiveness claims. No maximum-independent-set search was run.

\end{document}